\newtheorem{theorem}{Theorem}[section]
\numberwithin{equation}{section}
\newcommand{\bbQ}{\ensuremath{\mathbb{Q}}}
\newcommand{\bbC}{\ensuremath{\mathbb{C}}}
\newcommand{\bbQp}{\ensuremath{{\mathbb{Q}_p}}}
\newcommand{\bbQpbar}{{\ensuremath{\overline{\mathbb{Q}}_p}}}
\newcommand{\bbZ}{\ensuremath{\mathbb{Z}}}
\newcommand{\bbZp}{\ensuremath{{\mathbb{Z}_p}}}
\newcommand{\bbZpbar}{\ensuremath{{\overline{\mathbb{Z}}_p}}}
\newcommand{\Fr}{\ensuremath{\mathrm{Fr}}}
\newcommand{\tr}{\ensuremath{\mathrm{tr}}}
\newcommand{\xra}{\xrightarrow}
\newcommand{\gln}{\ensuremath{\operatorname{GL}}}
\begin{document}
\title{On Frobenius semisimplicity in Hida families}
\author{Jyoti Prakash Saha}
\address{Department of Mathematics, Ben-Gurion University of the Negev, Be'er Sheva 8410501, Israel}
\curraddr{}
\email{jyotipra@post.bgu.ac.il}
\thanks{}

\subjclass[2010]{11F11, 11F80}

\keywords{Frobenius action, Ordinary cusp forms, Hida families}

\begin{abstract}
Let $p\geq 5$ be a prime and $\ell\neq p$ be a prime not dividing the tame level of a $p$-ordinary Hida family. We prove that the actions of the Frobenius element at $\ell$ on the Galois representations attached to almost all arithmetic specializations are semisimple and non-scalar. If $k_f$ denotes the weight of a cusp form $f(z)= \sum_{n\geq 1} a_\ell(f) e^{2\pi i n z}$, then the inequality 
$$|a_\ell(f) | \leq 2 \ell^{(k_f-1)/2},$$
predicted by the Ramanujan conjecture, is a strict inequality for almost all members $f$ of the family. 
\end{abstract}

\maketitle

\section{Introduction}
The study of the Galois representations of the absolute Galois groups of number fields is of central importance in number theory. The \'etale cohomologies of algebraic varieties provide examples of compatible systems of Galois representations. According to the Fontaine--Mazur conjecture \cite[Conjecture 1]{FontaineMazurGeometricGaloisReprs}, any irreducible $p$-adic Galois representation of the absolute Galois group of a number field $F$ which is geometric (in the sense of \cite[p. 193]{FontaineMazurGeometricGaloisReprs}) is isomorphic to a subquotient of an \'etale cohomology group of a smooth projective variety over $F$ with coefficients in $\bbQp(r)$ for some $r\in \bbZ$. 

If $f$ is a normalized eigen new cusp form of weight $\geq 2$ and $\mathfrak{p}$ is a prime of its number field $K$, then by the works of Eichler \cite{Eichler54}, Shimura \cite{ShimuraCorrespondances} and Deligne \cite{DeligneModFormAndlAdicRepr}, there exists a continuous Galois representation $\rho_{f, \mathfrak{p}}: \mathrm{Gal}(\overline{\mathbb{Q}}/\bbQ) \to \gln_2(K_\mathfrak{p})$ which is absolutely irreducible by a result of Ribet \cite[Theorem 2.3]{RibetGalReprAttachedToEigenformsWithNebentypus} (we refer to \S \ref{Sec: Modular forms} for further properties of $\rho_{f, \mathfrak{p}}$). In \cite{SchollMotivesForModForm}, Scholl constructed motives attached to normalized eigen new cusp forms of weight $\geq 2$. It is expected that the unramified Frobenius elements act semisimply on the \'etale realizations of motives (see \cite[Question 12.4]{SerreMotivesVol} of Serre, the conjecture of Grothendieck--Serre \cite[Conjecture 12.5]{JannsenMixedMotivesAlgKTheory}). If $f$ is of weight two, or $f$ has weight $\geq 3$ and Tate's conjecture \cite[Conjecture 1.14]{MilneMotivesOverFiniteFields} holds, then Coleman and Edixhoven \cite[Theorem 4.1]{ColemanEdixhoven} proved that the actions of the Frobenius elements at the unramified places of $\rho_{f, \mathfrak{p}}$ are semisimple and non-scalar. When $f$ is of level one and has weight $\geq 2$, Gouv\^ea proved that the actions of the unramified Frobenius elements on $\rho_{f, \mathfrak{p}}$ are semisimple and non-scalar \cite[Theorem 1]{GouveaWhereTheSlopesAre}. 

If $f$ has weight $k\geq 2$ and $\ell$ is a prime not dividing the level of $f$, then the Ramanujan conjecture predicts that the $\ell$-th Fourier coefficient $a_\ell(f)$ of $f$ satisfies 
\begin{equation}
\label{Pure}
|a_\ell(f)| \leq 2\ell^{(k-1)/2}.
\end{equation} 
Deligne proved this conjecture by showing that the roots of the characteristic polynomial of the action of Frobenius element $\Fr_\ell$ on $\rho_{f, \mathfrak p}$ have absolute values equal to $\ell^{(k-1)/2}$ (for any $\mathfrak{p}$ not containing $\ell$) \cite[Theorem 1.6]{DeligneWeil1}. Note that the characteristic polynomial of $\rho_{f, \mathfrak p}(\Fr_\ell)$ has simple roots if and only if the inequality in Equation \eqref{Pure} is strict. In other words, the action of the Frobenius element $\Fr_\ell$ on $\rho_{f, \mathfrak{p}}$ is semisimple and non-scalar if and only if the inequality in Equation \eqref{Pure} is strict. Hence from the results of \cite{ColemanEdixhoven, GouveaWhereTheSlopesAre}, it follows that the inequality in Equation \eqref{Pure} is strict if $f$ is of level one, or if $f$ has weight two, or if $f$ has weight $\geq 3$ and Tate's conjecture holds. 

Let $\ell\neq p$ be a prime not dividing the tame level of a $p$-ordinary Hida family with $p\geq 5$. We prove in Theorem \ref{Thm: Hida} that the Frobenius element at $\ell$ acts with distinct eigenvalues on the Galois representations associated with almost all arithmetic specializations of the family, and consequently, the inequality in the Ramanujan conjecture at the prime $\ell$ is strict for almost all members of the Hida family. 

\section{Modular forms}
\label{Sec: Modular forms}
A holomorphic function $f: \mathfrak{h} \to \bbC$ defined on the upper-half plane $\mathfrak{h}$ is called a \textit{cusp form} of weight $k\geq 1$ and level $N\geq 1$ if 
$$f\left(\dfrac{az+b}{cz+d}\right) = (cz+d)^k f(z), \quad z\in \mathfrak{h}$$
holds for all elements 
$
\left(
\begin{smallmatrix}
a  & b \\
c & d 
\end{smallmatrix}
\right)\in \Gamma_1(N):= \ker \left(\mathrm{SL}_2(\bbZ) \xra{\mathrm{mod}\, N} \mathrm{SL}_2(\bbZ/N\bbZ)\right)$, and $f$ is holomorphic at the cusps of $\Gamma_1(N)$ and vanishes at $\infty$. The space $S_k(\Gamma_1 (N))$ of cusp forms of weight $k$ and level $N$ is a finite dimensional complex vector space. This space is equipped with the action of the Hecke operators $T_n$ for each integer $n \geq 1$. An element $f\in S_k(\Gamma_1(N))$ is called an \textit{eigenform} if it is an eigenvector for $T_n$ for any $n\geq 1$. If $f(z) = \sum_{n\geq 1} a_n e^{2\pi i nz}$ is a normalized eigen new form, then its Fourier coefficients generate a finite extension of $\bbQ$, called the \textit{number field} of $f$. For more details, we refer to \cite{Shimurabookmodform}. 

We choose algebraic closures $\overline{\mathbb{Q}}$ of $\bbQ$ and $\bbQpbar$ of $\mathbb{Q}_p$. Fix embeddings $i_\infty: \overline{\mathbb{Q}} \hookrightarrow \bbC, i_p: \overline{\mathbb{Q}} \hookrightarrow \bbQpbar$. A normalized eigenform in $S_k(\Gamma_1(N))$ is called \textit{$p$-ordinary} if its $p$-th Fourier coefficient is a unit in $\bbZpbar$ under the embeddings $i_\infty, i_p$. It is known that if an eigenform is attached to a non-CM rational elliptic curve, then it is ordinary for a density one set of primes \cite[IV-13, Exercise]{SerreAbelianEllAdic}. More generally, if $f$ is attached to a rational elliptic curve, then $f$ is ordinary for infinitely many primes \cite[Exercise 5.11]{SilvermanArithmeticOfEllipticCurves2009}. 

Suppose $f(z) = \sum_{n\geq 1} a_n e^{2\pi i nz}$ is a normalized eigen new form of level $N$ and weight $k\geq 2$ with number field $K$. By the works of Eichler \cite{Eichler54}, Shimura \cite{ShimuraCorrespondances} and Deligne \cite{DeligneModFormAndlAdicRepr}, for each non-archimedean prime $\mathfrak{p}$ of $K$, there exists a continuous Galois representation $\rho_{f, \mathfrak{p}} : \mathrm{Gal}(\overline{\mathbb{Q}}/\bbQ) \to \gln_2(K_\mathfrak{p})$ such that for any prime $\ell$ not dividing the product of $N$ and the prime lying below $\mathfrak{p}$, the representation $\rho_{f, \mathfrak{p}}$ is unramified at $\ell$ and the trace of the action of the Frobenius element $\Fr_\ell$ on $\rho_{f, \mathfrak{p}}$ is equal to $a_\ell$. Ribet proved that this representation is absolutely irreducible \cite[Theorem 2.3]{RibetGalReprAttachedToEigenformsWithNebentypus}. We denote by $\rho_f$ the Galois representation corresponding to the non-archimedean prime of $K$ induced by the embeddings $i_\infty, i_p$. 

\section{Frobenius action in Hida families}
\label{Sec: Hida families}
During 1980's, Hida proved that the ordinary modular forms can be put in $p$-adic families. We review the notion of Hida families following \cite{HidaICM86} and refer to \cite{HidaGalrepreord, HidaIwasawa} for the details. 

Let $p\geq 5$ be a prime and $N$ be a positive integer not divisible by $p$. Let $h^{\mathrm{ord}}(N;\bbZp)$ denote the universal $p$-ordinary Hecke algebra of tame level $N$, which is an algebra over $\bbZp[[X]]$. We denote $h^{\mathrm{ord}}(N;\bbZp)$ by $h^{\mathrm{ord}}$. If $A$ is a $\bbZp[[X]]$-algebra, then a $\bbZp$-algebra homomorphism from $A\to \bbQpbar$ is called an \textit{arithmetic specialization of weight $k$} if it vanishes at $(1+X)^{p^{r-1}} - (1+p)^{(k-2)p^{r-1}}$ for some integers $k\geq 2, r\geq 1$. By \cite[Theorem 2.2]{HidaICM86}, the arithmetic specializations of $h^{\mathrm{ord}}$ are in one-to-one correspondence with the $p$-ordinary $p$-stabilized normalized eigen cusp form of tame level a divisor of $N$ and weight $\geq 2$. Moreover, under this correspondence, the arithmetic specializations of weight $k$ correspond to the ordinary forms of weight $k$. If $\mathfrak{a}$ is a minimal prime ideal of $h^{\mathrm{ord}}$, denote the quotient ring $h^{\mathrm{ord}}/\mathfrak{a}$ by $R(\mathfrak{a})$, and the fraction field of $R(\mathfrak{a})$ by $Q(\mathfrak{a})$. For each minimal prime $\mathfrak{a}$ of $h^{\mathrm{ord}}$, by \cite[Theorem 3.1]{HidaICM86}, there exists a unique (up to equivalence) continuous representation $\rho_\mathfrak{a}: \mathrm{Gal}(\overline{\mathbb{Q}}/\bbQ) \to \gln_2(Q(\mathfrak{a}))$ such that its trace is contained in $R(\mathfrak{a})$, and for any prime $\ell\nmid Np$, the representation $\rho_\mathfrak{a}$ is unramified at $\ell$ and the trace of $\rho_\mathfrak{a}(\Fr_\ell)$ is equal to $T_\ell\, \mathrm{mod}\, \mathfrak{a}$ where $T_\ell\in h^{\mathrm{ord}}$ denotes the Hecke operator at $\ell$. Moreover, for any arithmetic specialization $\lambda: h^{\mathrm{ord}} \to \bbQpbar$ with $\lambda(\mathfrak{a})=0$ for some minimal prime $\mathfrak{a}$ of $h^{\mathrm{ord}}$, the pseudo-representation $\lambda \circ \tr \rho_\mathfrak{a}$ is equal to the trace of the Galois representation $\rho_{f_\lambda}$ where $f_\lambda$ denotes the ordinary form corresponding to $\lambda$. For an arithmetic specialization $\lambda$ of $h^{\mathrm{ord}}$, we denote the weight of the associated ordinary form $f_\lambda$ by $k_\lambda$ and the $q$-expansion of $f_\lambda$ by $f_\lambda(z) = \sum_{n\geq 1} a_\ell(f_\lambda) q^n$ where $q = e^{2\pi i nz}$. 

\begin{theorem}
\label{Thm: Hida}
Let $\ell$ be a prime not dividing $Np$. Then for any minimal prime $\mathfrak{a}$ of $h^{\mathrm{ord}}$, the action of the Frobenius element $\Fr_\ell$ on $\rho_\mathfrak{a}$ has distinct eigenvalues. Consequently, for almost all arithmetic specialization $\lambda$ of $h^{\mathrm{ord}}$, the Frobenius element $\Fr_\ell$ acts on $\rho_{f_\lambda}$ with distinct eigenvalues and the inequality in the Ramanujan conjecture at the prime $\ell$ is strict for $f_\lambda$, i.e., the inequality
\begin{equation}
\label{Pure family}
|a_\ell(f_\lambda) | < 2 \ell ^{(k_\lambda-1)/2}
\end{equation}
holds.
\end{theorem}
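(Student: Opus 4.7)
Let $T := \tr \rho_{\mathfrak a}(\Fr_\ell) \in R(\mathfrak a)$ and $D := \det \rho_{\mathfrak a}(\Fr_\ell) \in R(\mathfrak a)$ (the latter lies in $R(\mathfrak a)$ since $2$ is a unit in $R(\mathfrak a)$ for $p \geq 5$, and the Newton identity $2 \det A = (\tr A)^2 - \tr(A^2)$ recovers the determinant from traces of $\rho_{\mathfrak a}$ on $\Fr_\ell$ and $\Fr_\ell^2$). The characteristic polynomial of $\rho_{\mathfrak a}(\Fr_\ell)$ is $X^2 - TX + D$, with discriminant $\Delta := T^2 - 4D \in R(\mathfrak a)$. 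The assertion that $\Fr_\ell$ has distinct eigenvalues on $\rho_{\mathfrak a}$ is equivalent to $\Delta \neq 0$ in $R(\mathfrak a)$; I plan to prove this by contradiction.

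Suppose $\Delta = 0$. Because the structure map $\bbZp[[X]] \to R(\mathfrak a)$ is a finite injective extension of integral domains, each weight-$k$ arithmetic prime of $\bbZp[[X]]$ ($k \geq 2$) has at least one prime of $R(\mathfrak a)$ lying over it, giving the existence of an arithmetic specialization $\lambda_0 : R(\mathfrak a) \to \bbQpbar$ of weight $2$. Applying $\lambda_0$ to $T^2 = 4D$ yields $a_\ell(f_{\lambda_0})^2 = 4 \epsilon_{\lambda_0}(\ell)\, \ell$, where $\epsilon_{\lambda_0}$ is the nebentypus of the weight-$2$ form $f_{\lambda_0}$; thus $\rho_{f_{\lambda_0}}(\Fr_\ell)$ has a repeated eigenvalue. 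But Coleman--Edixhoven's theorem, which is unconditional in weight $2$, asserts that $\Fr_\ell$ acts on $\rho_{f_{\lambda_0}}$ semisimply and non-scalarly, hence with two distinct eigenvalues — giving the desired contradiction. So $\Delta \neq 0$, proving the first assertion.

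For the consequence, an arithmetic specialization $\lambda$ saturates the Ramanujan bound (equivalently, has $\rho_{f_\lambda}(\Fr_\ell)$ acting with a repeated eigenvalue) precisely when $\Delta \in \ker \lambda$. Because $R(\mathfrak a)$ is a Noetherian integral domain of Krull dimension $2$, the nonzero element $\Delta$ lies in only finitely many height-one primes of $R(\mathfrak a)$; each arithmetic specialization $\lambda$ has a height-one prime as kernel, and over each such prime lie only finitely many $\bbQpbar$-valued specializations (Galois conjugates of a residue-field embedding). Hence only finitely many arithmetic $\lambda$ violate \eqref{Pure family}. The main obstacle in this plan is invoking the existence of weight-$2$ arithmetic specializations on every irreducible component $\mathrm{Spec}\, R(\mathfrak a)$: descending to weight $2$ is precisely what allows use of the unconditional Coleman--Edixhoven result instead of its weight-$\geq 3$ variant, which would require Tate's conjecture; a secondary bookkeeping point is checking that $D$ and the trace are both in $R(\mathfrak a)$ rather than merely in $Q(\mathfrak a)$, which the $p \geq 5$ hypothesis guarantees.
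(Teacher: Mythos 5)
Your proposal is correct and takes essentially the same route as the paper: your discriminant $\Delta = T^2-4D$ is exactly the paper's element $x = 2\tr \rho_{\mathfrak{a}}(\Fr_\ell^2)-(\tr \rho_{\mathfrak{a}}(\Fr_\ell))^2$, shown nonzero via a weight-two arithmetic specialization (whose existence follows from finiteness of $R(\mathfrak{a})$ over $\bbZp[[X]]$) and the unconditional weight-two case of Coleman--Edixhoven. Your treatment of the ``almost all'' statement (finitely many height-one primes containing $\Delta$, finitely many specializations per prime) and the appeal to Deligne's purity for the strict Ramanujan inequality likewise mirror the paper's argument, the only cosmetic differences being your use of the determinant and a proof by contradiction where the paper works directly with traces.
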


\begin{proof}
Denote by $x$ the element $ 2\tr \rho_\mathfrak{a} (\Fr_\ell^2)- (\tr \rho_\mathfrak{a} (\Fr_\ell))^2$ of $R(\mathfrak{a})$. Note that $x$ is equal to the square of the difference of the eigenvalues of $\rho_\mathfrak{a}(\Fr_\ell)$ and for any arithmetic specialization $\lambda$ of $R(\mathfrak{a})$, $\lambda(x)$ is equal to the square of the difference of the eigenvalues of $\rho_{f_\lambda}(\Fr_\ell)$ since $\lambda\circ \tr \rho_\mathfrak{a} = \tr \rho_{f_\lambda}$. Since $h^{\mathrm{ord}}$ is of finite type over $\bbZp[[X]]$ (by \cite[Theorem 2.2]{HidaICM86}), its quotient $R(\mathfrak{a})= h^{\mathrm{ord}}/\mathfrak{a}$ is also of finite type over $\bbZp[[X]]$. So the induced map $\mathrm{Spec} R(\mathfrak{a}) \to \mathrm{Spec} \bbZp[[X]]$ is surjective and has finite fibres. Thus $R(\mathfrak{a})$ has arithmetic specializations of any weight $\geq 2$. Let $\lambda'$ be an arithmetic specialization of $R(\mathfrak{a})$ of weight two. So the associated ordinary form $f_{\lambda'}$ is of weight two. Hence, by \cite[Theorem 4.1]{ColemanEdixhoven}, the eigenvalues of $\Fr_\ell$ on $\rho_{f_{\lambda'}}$ are distinct. Then it follows that the image of $x$ under $\lambda'$ is nonzero. So $x$ is a nonzero element of $R(\mathfrak{a})$. This shows that the action of $\Fr_\ell$ on $\rho_\mathfrak{a}$ has distinct eigenvalues for any minimal prime $\mathfrak{a}$ of $h^{\mathrm{ord}}$. 

Note that the zero locus of $x$ defines a finite subset of $\mathrm{Spec} R(\mathfrak{a})$ and a prime ideal of $R(\mathfrak{a})$ could be the kernel of at most a finite number of arithmetic specializations of $R(\mathfrak{a})$. Hence, for almost all arithmetic specialization $\lambda$ of $h^{\mathrm{ord}}$, the image of $x$ under $\lambda$ is nonzero, or equivalently, $\Fr_\ell$ acts on $\rho_{f_\lambda}$ with distinct eigenvalues. 

By \cite[Theorem 1.6]{DeligneWeil1}, the roots of the characteristic polynomial of $\rho_{f_\lambda}(\Fr_\ell)$ have absolute values equal to $\ell^{(k_\lambda-1)/2}$ for any arithmetic specialization $\lambda$. Since these roots are distinct for almost all $\lambda$, it follows that $a_\ell(f_\lambda)$ satisfies the inequality in Equation \eqref{Pure family}. This completes the proof.  
\end{proof}

\section*{Acknowledgement}
The author would like to acknowledge the support provided by a postdoctoral fellowship at the Ben-Gurion University of the Negev, offered by the Israel Science Foundation Grant number 87590011 of Prof.\,Ishai Dan-Cohen. 

\def\cprime{$'$} \def\Dbar{\leavevmode\lower.6ex\hbox to 0pt{\hskip-.23ex
  \accent"16\hss}D} \def\cfac#1{\ifmmode\setbox7\hbox{$\accent"5E#1$}\else
  \setbox7\hbox{\accent"5E#1}\penalty 10000\relax\fi\raise 1\ht7
  \hbox{\lower1.15ex\hbox to 1\wd7{\hss\accent"13\hss}}\penalty 10000
  \hskip-1\wd7\penalty 10000\box7}
  \def\cftil#1{\ifmmode\setbox7\hbox{$\accent"5E#1$}\else
  \setbox7\hbox{\accent"5E#1}\penalty 10000\relax\fi\raise 1\ht7
  \hbox{\lower1.15ex\hbox to 1\wd7{\hss\accent"7E\hss}}\penalty 10000
  \hskip-1\wd7\penalty 10000\box7}
  \def\polhk#1{\setbox0=\hbox{#1}{\ooalign{\hidewidth
  \lower1.5ex\hbox{`}\hidewidth\crcr\unhbox0}}}

\end{document}